\documentclass[11pt, reqno]{amsart}

% Packages
\usepackage{amsmath, amssymb, amsthm}
\usepackage{geometry}
\usepackage{enumitem}
\usepackage{graphicx}
\usepackage{hyperref} % Hyperref should generally be loaded last

% Theorem Styles
\newtheorem{theorem}{Theorem}
\newtheorem{lemma}[theorem]{Lemma}
\newtheorem{corollary}[theorem]{Corollary}
\theoremstyle{definition}
\newtheorem{definition}[theorem]{Definition}
\theoremstyle{remark}
\newtheorem*{remark}{Remark}

% Macros
\newcommand{\R}{\mathbb{R}}
\newcommand{\Q}{\mathbb{Q}}
\newcommand{\Z}{\mathbb{Z}}

% Page Setup
\geometry{a4paper, margin=1in}

\title[Solution to a Problem of Erd\H{o}s Concerning Distances and Points]{Solution to a Problem of Erd\H{o}s Concerning Distances and Points}

% AUTHOR INFORMATION - FILL THIS IN
\author{Benjamin Grayzel} 
\address{Dartmouth College, Departments of Mathematics \& Computer Science}
\email{benjamin.a.grayzel.gr@dartmouth.edu}

% DATE
\date{\today}

\begin{document}

\begin{abstract}
In 1997, Erd\H{o}s asked \cite{erdos1997} whether for arbitrarily large $n$ there exists a set of $n$ points in $\R^2$ that determines $O(\frac{n}{\sqrt{\log n}})$ distinct distances while satisfying the local constraint that every $4$-point subset determines at least $3$ distinct pairwise distances. We use $n$-point subsets of an $m\times m$ box in the anisotropic lattice
\[
L = \{(x,\sqrt{2}y):x,y \in \Z\} \subset \R^2.
\]
The distinct-distance bound follows from Bernays' theorem applied to the binary quadratic form $u^2+2v^2$ \cite{brinkmoreeosburn2011}, following earlier work on lattices with few distances \cite{sheffer2014blog, moreeosburn2006}. The local $4$-point constraint is verified using Perucca's classification \cite{perucca4points} of $4$-point two-distance sets, together with the observation noted here that this lattice excludes the remaining regular-pentagon trapezoid configuration.
\end{abstract}

\maketitle

\section{Introduction}

Erd\H{o}s asks whether there exists a set of $n$ points in $\R^2$ such that every subset
of $4$ points determines at least $3$ distinct distances, yet the total number of distinct distances is
\[
\le C\frac{n}{\sqrt{\log n}}
\]
for some constant $C > 0$ \cite{erdos1997}, which we write as $O\!\left(\frac{n}{\sqrt{\log n}}\right)$. A formulation also appears as the 659th problem on the Erd\H{o}s problems website \cite{erdosproblems}.
 
 Constructions of planar point sets with few distinct distances arising from anisotropic lattices, together with the use of Bernays' theorem (or related results) to count represented integers, appear in earlier work of Moree and Osburn \cite{moreeosburn2006}; see also Sheffer's discussion \cite{sheffer2014blog}. The additional point in the present note is that the specific lattice $\Z\times \sqrt2\,\Z$ also excludes the remaining $4$-point two-distance configuration consisting of four vertices of a regular pentagon (equivalently, the associated isosceles trapezoid). Combined with the complete similarity classification of $4$-point two-distance sets \cite{perucca4points}, this verifies the local constraint, answering Erd\H{o}s’ question in the affirmative.

\begin{theorem}\label{thm:main}
For every integer $n\ge 2$ there is a set $P\subset \R^2$ with $|P|=n$ satisfying:
\begin{enumerate}[label=(\roman*)]
\item every $4$-point subset of $P$ determines at least $3$ distinct pairwise distances; and
\item $\bigl|\{\|p-q\|:\ p,q\in P,\ p\neq q\}\bigr| = O\!\left(\frac{n}{\sqrt{\log n}}\right)$.
\end{enumerate}
\end{theorem}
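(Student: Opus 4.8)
The plan is to realize $P$ as an $n$-element subset of the $m\times m$ box in $L$, with $m=\lceil\sqrt n\,\rceil$; that is, I would set
\[
P_0:=\{(x,\sqrt2\,y):\ 0\le x,y\le m-1\}\subset L,\qquad |P_0|=m^2\ge n,
\]
and take any $P\subseteq P_0$ with $|P|=n$. Since property (i) is inherited by subsets and the distinct-distance count in (ii) can only decrease under passage to a subset, this reduces the theorem to two claims: (i) every $4$-point subset of $L$ determines at least $3$ distinct distances; and (ii) $P_0$ determines $O(n/\sqrt{\log n})$ distinct distances.

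For (ii) I would note that for $p=(x_1,\sqrt2\,y_1),q=(x_2,\sqrt2\,y_2)\in P_0$ one has $\|p-q\|^2=u^2+2v^2$ with $u=x_1-x_2,\ v=y_1-y_2\in\Z$ and $|u|,|v|\le m-1$, so every squared distance occurring in $P_0$ is a positive integer at most $N:=3(m-1)^2$. By Bernays' theorem applied to the form $u^2+2v^2$ (see \cite{brinkmoreeosburn2011}), the number of integers in $[1,N]$ represented by this form is $O(N/\sqrt{\log N})$; since $m=\Theta(\sqrt n)$ gives $N=\Theta(n)$ and $\log N=\Theta(\log n)$, the number of distinct squared distances, hence of distinct distances, in $P_0$ is $O(n/\sqrt{\log n})$.

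For (i) I would argue by contradiction: suppose a $4$-point set $S\subset L$ determines at most $2$ distinct distances. No $4$ points of $\R^2$ are mutually equidistant, so $S$ is a $4$-point two-distance set, and by Perucca's classification \cite{perucca4points} it is similar to one of exactly six configurations. Two arithmetic facts about $L$ then close the argument. First, squared distances in $L$ are positive integers, so any two of them have rational ratio. Second, the shoelace formula gives every triangle with vertices $(x_i,\sqrt2\,y_i)\in L$ an area of the form $\tfrac{\sqrt2}{2}|D|$ with $D\in\Z$; comparing this with the area $\tfrac{\sqrt3}{4}s^2$ of an equilateral triangle and with the area $s^2$ of a square (where in both cases $s^2\in\Z$ is a squared distance occurring in $L$) forces, via the irrationality of $\sqrt6$ and of $\sqrt2$ respectively, the conclusion that $L$ contains no equilateral triangle and no square. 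Four of the six configurations — the equilateral triangle together with its centre, the $60^\circ$ rhombus, and two further configurations each containing an equilateral triangle — are excluded by the first conclusion, and the square is excluded by the second; the sole remaining configuration is the set of four vertices of a regular pentagon (equivalently the associated isosceles trapezoid), whose two distances are in ratio $\varphi=(1+\sqrt5)/2$, so an embedding in $L$ would make $\varphi^2=(3+\sqrt5)/2$ a ratio of two integer squared distances, which is impossible. Hence no such $S$ exists, giving (i).

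I expect (ii) and the reductions above to be routine. The part demanding genuine care — and, per the abstract, the new point — is the confrontation with Perucca's list: correctly pinning down the six configurations and checking that each is incompatible with the arithmetic of $L$. The regular-pentagon trapezoid is the interesting case, since, unlike the other five, it is not ruled out by the ``no equilateral triangle, no square'' principle but only by the irrationality of the golden ratio.
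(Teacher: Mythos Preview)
Your proposal is correct and follows essentially the same approach as the paper: the same lattice $L=\Z\times\sqrt2\,\Z$, the same $m\times m$ box with $m=\lceil\sqrt n\rceil$, the same Bernays count for $u^2+2v^2$, the same appeal to Perucca's classification, and the identical golden-ratio argument for the pentagon trapezoid. The only difference is cosmetic: you exclude squares and equilateral triangles via a shoelace/area argument (areas in $L$ lie in $\tfrac{\sqrt2}{2}\Z$, incompatible with $s^2$ and $\tfrac{\sqrt3}{4}s^2$ for integer $s^2>0$), whereas the paper uses the $90^\circ$ and $60^\circ$ rotation arguments directly --- both are standard and equally short.
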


\section{The Construction}

Consider the anisotropic lattice
\[
L = \{(x,\sqrt{2}y):x,y \in \Z\} \subset \R^2.
\]
For an integer $m\ge 1$, define the $m\times m$ box subset 
\[
P_m=\{(i,\sqrt2\,j): 0\le i,j\le m-1\}.
\]
Note that $|P_m|=m^2$.

\begin{definition}
For a finite set $P\subset\R^2$, let
\[
D(P) := \{\|p-q\| : p,q\in P,\ p\neq q\}
\]
denote its set of (nonzero) distinct pairwise distances.
\end{definition}
\section{Verification of the distinct-distance bound}\label{sec:distinct-dist}

Since $d\mapsto d^2$ is a bijection on the positive reals, we may bound $|D(P_m)|$ by counting distinct \emph{squared} distances. For $p=(x_1,\sqrt2\,y_1)$ and
$q=(x_2,\sqrt2\,y_2)$ in $L$,
\[
\|p-q\|^2=(x_1-x_2)^2+2(y_1-y_2)^2 = u^2+2v^2
\]
where $u,v\in\Z$. For $p,q\in P_m$ we have $|u|\le m-1$ and $|v|\le m-1$, hence
\[
\|p-q\|^2 \le (m-1)^2+2(m-1)^2 < 3m^2.
\]

Distinct squared distances in $P_m$ are exactly the values of the quadratic form
$Q(u,v)=u^2+2v^2$ with $|u|,|v|\le m-1$, and as such are bounded by $3m^2$. Therefore
\[
|D(P_m)| \le B_Q(3m^2),
\]
where $B_Q(x)$ denotes the number of positive integers $\le x$ represented by $Q$.

\begin{theorem}[Bernays {\cite[Eq.~(2)]{brinkmoreeosburn2011}}]\label{thm:bernays}
Let $f(X,Y)=aX^2+bXY+cY^2$ be a primitive integral binary quadratic form with non-square
discriminant $\Delta=b^2-4ac$, and assume $f$ is positive definite. Let $B_f(x)$ denote
the number of positive integers $\le x$ represented by $f$. Then there exists a constant $C_\Delta>0$
(depending on $\Delta$) such that
\[
B_f(x)\sim C_\Delta\,\frac{x}{\sqrt{\log x}}
\qquad (x\to\infty).
\]
\end{theorem}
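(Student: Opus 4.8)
The plan is to translate representability by $f$ into a counting problem for integral ideals in the imaginary quadratic order $\mathcal{O}$ of discriminant $\Delta$ inside $K=\Q(\sqrt{\Delta})$, and then to extract the asymptotic density of the \emph{image} of $f$ by the Landau--Selberg--Delange method. The classical dictionary between positive definite forms and oriented ideal classes identifies $f$ with a class $C$ in the form class group $\mathrm{Cl}(\mathcal{O})$, and shows that a positive integer $n$ is represented by $f$ if and only if there is an integral ideal $\mathfrak{a}\subset\mathcal{O}$ with norm $n$ lying in $C$ (up to the standard bookkeeping for the conductor and for imprimitive representations, which does not affect the leading asymptotics and reduces to the maximal-order case). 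The feature to keep in mind throughout is the gap between the \emph{weighted} count $\sum_{n\le x} r_f(n)$, which is asymptotic to a constant times $x$ by a direct lattice-point/area argument, and the \emph{support} count $B_f(x)=\#\{n\le x:\ r_f(n)>0\}$, which is smaller by a factor of order $\sqrt{\log x}$ because typical represented integers are hit with high multiplicity. Capturing this loss correctly, with a constant depending only on $\Delta$, is the entire content of the theorem.

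First I would record the prime-splitting structure: writing $\chi_\Delta=\left(\tfrac{\Delta}{\cdot}\right)$ for the Kronecker symbol, a prime $p$ splits, ramifies, or is inert according as $\chi_\Delta(p)=+1,0,-1$, and an integer is a norm from $\mathcal{O}$ precisely when every inert prime divides it to an even power. The decisive structural observation is that, up to a sparse exceptional set, ``represented by $f$'' coincides with ``represented by the genus of $f$'': if $n=\prod p_i$ is a norm, the classes of its ideals of norm $n$ form a coset of the subgroup $H_n=\langle[\mathfrak{p}]^2:\ p\mid n,\ p\text{ split}\rangle$, and since $\mathrm{Cl}(\mathcal{O})^2$ is the principal genus (genus theory), $H_n$ equals the whole principal genus for all but a thin set of $n$; for such $n$ the attainable classes fill the entire genus determined by $n$, so $C$ is attained. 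Consequently $B_f(x)$ agrees, to leading order, with the count of integers $\le x$ lying in the genus of $f$, a set cut out by congruence (multiplicative) conditions by Gauss's characterization of genera.

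For this genus count I would pass through the set $N$ of all norms from $\mathcal{O}$, form its Dirichlet series, and factor the Euler product as
\[
\sum_{n\in N} n^{-s} \;=\; \zeta(s)^{1/2}\,L(s,\chi_\Delta)^{1/2}\,H(s),
\]
where $H$ is holomorphic and non-vanishing in a standard zero-free neighborhood of $\mathrm{Re}(s)=1$ (the inert and ramified Euler factors contribute only at $\mathrm{Re}(s)>\tfrac12$). The singularity is $(s-1)^{-1/2}$, the exponent $\tfrac12$ reflecting that the split primes, which alone contribute a full factor $(1-p^{-s})^{-1}$, have density $\tfrac12$; the nonvanishing $L(1,\chi_\Delta)\neq0$ guarantees $H(1)\neq0$. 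The Selberg--Delange method (or Landau's original Tauberian lemmas) then yields $\#\{n\le x:\ n\in N\}\sim c\,x(\log x)^{-1/2}$. Decomposing $N$ into genera by the genus characters $\eta$, only the trivial character carries the $(s-1)^{-1/2}$ pole (the twists by nontrivial quadratic $\eta$ relate to $L$-functions that are holomorphic and nonzero at $s=1$), so each genus receives the equal share $1/|\mathcal{G}|$ of the leading term, where $\mathcal{G}$ is the genus group; this makes the resulting constant $C_\Delta=c/|\mathcal{G}|$ depend only on $\Delta$, uniformly over all forms of that discriminant.

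The main obstacle is the step I glossed as ``up to a sparse exceptional set'': one must prove that restricting from the full genus to the single class $C$ changes the support count by only $o\!\left(x/\sqrt{\log x}\right)$, which is the genuine content beyond Landau's genus-level result. The exceptional integers are exactly those for which $H_n$ is a \emph{proper} subgroup of the principal genus, forcing every split prime factor $p$ of $n$ to satisfy $[\mathfrak{p}]^2$ lying in some fixed proper subgroup. By Chebotarev density (equivalently Hecke's theory, using $L(1,\psi)\neq0$ for every nontrivial character $\psi$ of $\mathrm{Cl}(\mathcal{O})$) these primes have density strictly below $\tfrac12$, so integers all of whose split prime factors are so constrained are counted by a Dirichlet series with a singularity weaker than $(s-1)^{-1/2}$, hence with exponent strictly less than $\tfrac12$. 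A further Selberg--Delange estimate then bounds this exceptional set by $x(\log x)^{-\theta}$ with $\theta>\tfrac12$, which is negligible against the main term. Making this last bound uniform over the finitely many proper subgroups, and tracking the nonvanishing of all the relevant $L$-functions at $s=1$, is the delicate point I expect to require the most care.
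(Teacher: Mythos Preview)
The paper does not prove this theorem at all: it is stated as a cited result (Bernays, via \cite{brinkmoreeosburn2011}) and used as a black box. There is therefore no ``paper's own proof'' to compare your sketch against.

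For what it is worth, your outline is a recognizable and essentially correct modern sketch of Bernays' theorem: the forms--ideals dictionary, the factorization of the Dirichlet series of norms as $\zeta(s)^{1/2}L(s,\chi_\Delta)^{1/2}H(s)$, the Selberg--Delange extraction of the $x/\sqrt{\log x}$ main term, the equidistribution among genera via genus characters, and the crucial step that passing from genus to class costs only $o(x/\sqrt{\log x})$ because the exceptional integers are supported on split primes of density strictly below $\tfrac12$. That last step is indeed the heart of the matter and the part that distinguishes Bernays' theorem from Landau's earlier genus-level result; your identification of it as the main obstacle is accurate. But none of this appears in the paper under review, which simply invokes the theorem.
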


The form $Q(u,v)=u^2+2v^2$ is primitive and positive definite, with discriminant
\[
\Delta = 0^2 - (4)\cdot(1)\cdot (2) = -8,
\]
which is not a square. Theorem~\ref{thm:bernays} therefore applies to $Q$, giving
\[
B_Q(x) \sim C_{-8}\,\frac{x}{\sqrt{\log x}}
\qquad (x\to\infty).
\]
In particular,
\[
|D(P_m)| \le B_Q(3m^2)
= O\!\left(\frac{m^2}{\sqrt{\log m}}\right)
= O\!\left(\frac{|P_m|}{\sqrt{\log |P_m|}}\right).
\]

\begin{corollary}\label{cor:general-n}
For every integer $n\ge 2$, there exists a set $P\subset\R^2$ with $|P|=n$ such that
\[
|D(P)| = O\!\left(\frac{n}{\sqrt{\log n}}\right).
\]
\end{corollary}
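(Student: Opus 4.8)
\emph{Proof proposal for Corollary~\ref{cor:general-n}.} The plan is to reduce the general case to the perfect-square case already handled in Section~\ref{sec:distinct-dist}, where $|P_m|=m^2$. Given an integer $n\ge 2$, I would set $m=\lceil\sqrt n\,\rceil$, so that $m^2\ge n$, and then let $P$ be \emph{any} $n$-element subset of $P_m$; this is possible precisely because $|P_m|=m^2\ge n$. Since $P\subseteq P_m$, every nonzero pairwise distance occurring in $P$ already occurs in $P_m$, so $D(P)\subseteq D(P_m)$ and in particular $|D(P)|\le|D(P_m)|$.

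Next I would chain this with the estimates from Section~\ref{sec:distinct-dist}. We have $|D(P_m)|\le B_Q(3m^2)$ for $Q(u,v)=u^2+2v^2$, and $B_Q$ is nondecreasing in its argument, so it suffices to bound $3m^2$ linearly in $n$. From $m=\lceil\sqrt n\,\rceil\le\sqrt n+1$ one gets $3m^2\le 3(\sqrt n+1)^2\le 12n$ for every $n\ge 1$ (a one-line estimate: $n+2\sqrt n+1\le 4n$ when $n\ge 1$). Hence $|D(P)|\le B_Q(12n)$. Finally, applying Bernays' theorem (Theorem~\ref{thm:bernays}) to $Q$, whose discriminant $-8$ is not a square, gives $B_Q(x)\sim C_{-8}\,x/\sqrt{\log x}$; substituting $x=12n$ and using $\log(12n)\sim\log n$ yields $B_Q(12n)=O\!\left(n/\sqrt{\log n}\right)$, which is the claimed bound.

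I do not expect a genuine obstacle here; the corollary is a packaging step rather than a new idea. The only point that deserves a moment's care is that replacing the $m^2\approx n$ points of $P_m$ by exactly $n$ points must not erode the $\sqrt{\log}$ savings — but this is automatic, since $m\asymp\sqrt n$ forces $\log m\asymp\log n$, so $m^2/\sqrt{\log m}\asymp n/\sqrt{\log n}$. (The sets $P$ produced this way are subsets of $P_m$, so they will also inherit the $4$-point constraint once that is verified for $P_m$; that verification, not the present counting argument, is where the real work lies, and it is carried out in the sections that follow.) \qed
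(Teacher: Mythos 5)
Your proposal is correct and follows essentially the same route as the paper's proof: take $m=\lceil\sqrt n\,\rceil$, pass to any $n$-point subset of $P_m$, use $D(P)\subseteq D(P_m)$, and conclude via $m^2\asymp n$ and $\log m\asymp\log n$. Your version simply makes the constants explicit (e.g.\ $3m^2\le 12n$ and monotonicity of $B_Q$), which the paper leaves implicit.
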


\begin{proof}
Let $m=\lceil\sqrt{n}\rceil$ and choose any $n$-point subset $P\subset P_m$.
Removing points cannot increase the number of distinct distances, so $|D(P)|\le |D(P_m)|$.
Also $m^2\asymp n$ and $\log m \asymp \log n$, hence
\[
|D(P)| \le |D(P_m)|
= O\!\left(\frac{m^2}{\sqrt{\log m}}\right)
= O\!\left(\frac{n}{\sqrt{\log n}}\right).
\]
\end{proof}

\section{Verification of the local $4$-point constraint}\label{sec:local4}

The local constraint fails if and only if the 4-point set determines \emph{exactly two} distinct distances. A complete classification of $4$-point two-distance configurations (up to similarity)
is given in \cite{perucca4points} and depicted in Figure~\ref{fig:forbidden_shapes}. Note that the only similarity types that do \emph{not} contain an equilateral triangle are the square and the regular-pentagon trapezoid.

\begin{figure}[ht]
  \centering
  \includegraphics[width=\linewidth]{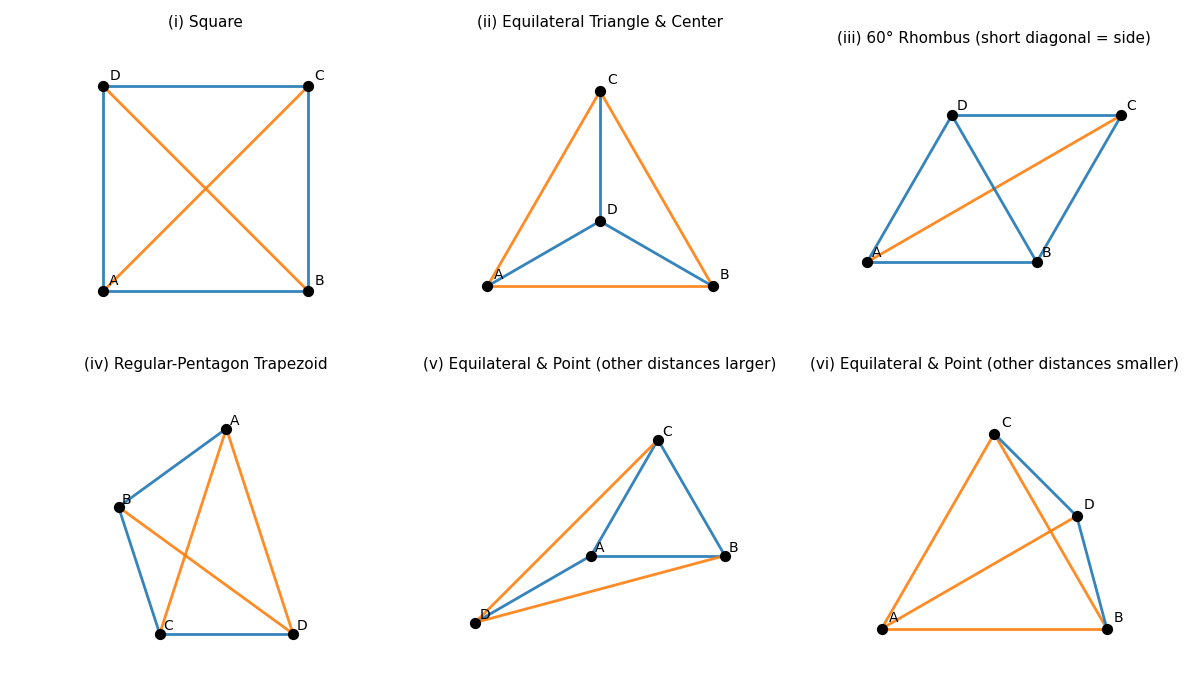}
  \caption{The six $4$-point two-distance configurations (up to similarity). Short vs.\ long segments are color-coded.}
  \label{fig:forbidden_shapes}
\end{figure}

\begin{theorem}\label{thm:local4}
For every $m\ge 1$, every $4$-point subset of $P_m$ determines at least $3$ distinct pairwise distances.
\end{theorem}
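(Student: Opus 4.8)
The plan is to argue by contradiction using the classification in \cite{perucca4points}. Suppose, for contradiction, that some $4$-point subset $S\subseteq P_m$ determines at most two distinct distances. Since four distinct points always determine at least two distances, $S$ determines exactly two and is therefore a $4$-point two-distance set, so by \cite{perucca4points} it is similar to one of the six configurations of Figure~\ref{fig:forbidden_shapes}. By the remark preceding this theorem, I would then split into three cases according to which type occurs: (a) three of the four points of $S$ are the vertices of an equilateral triangle; (b) $S$ is the vertex set of a square; or (c) $S$ is a $4$-element subset of the vertices of a regular pentagon (the regular-pentagon trapezoid). As $S\subseteq P_m\subseteq L$, it suffices to show that $L$ contains no configuration of any of these three types.

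For (a), I would use two features of $L$: first, by the computation of Section~\ref{sec:distinct-dist}, every squared distance between two points of $L$ has the form $u^2+2v^2$ with $u,v\in\Z$ and so is a positive integer; second, every nondegenerate triangle with vertices in $L$ has area a positive integer multiple of $\tfrac12\sqrt2$, the half-covolume of $L$ --- indeed, expanding its two edge vectors in the basis $\{(1,0),(0,\sqrt2)\}$ of $L$ gives coordinates $a,b,c,d\in\Z$, and the area equals $\tfrac12|ad-bc|\sqrt2$ with $ad-bc\neq 0$. An equilateral triangle of side $s$ has area $\tfrac{\sqrt3}{4}s^2$, so if such a triangle had vertices in $L$ then, writing $N=s^2\in\Z_{>0}$, we would get $\tfrac{\sqrt3}{4}N=\tfrac12\sqrt2\,k$ for some $k\in\Z_{>0}$, hence $\sqrt6=4k/N\in\Q$ --- absurd. (A direct alternative: rotation by $60^\circ$ sends a nonzero vector of $L$ outside $L$, as it introduces irrational factors $\sqrt3$ and $\sqrt6$.)

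For (b), the plan is to note that $L$ is not invariant under $90^\circ$ rotation: if $v=(u,\sqrt2\,w)$ with $u,w\in\Z$, then a $90^\circ$ rotation carries it to $(\mp\sqrt2\,w,\pm u)$, which lies in $L=\Z\times\sqrt2\,\Z$ only if $u=w=0$. Since the two sides of a square issuing from a common vertex form such a rotated pair, and each side is a difference of two points of $L$, no nondegenerate square has all its vertices in $L$. For (c), I would use that in a regular pentagon the side $s$ and the diagonal $d$ satisfy $d/s=\varphi$, the golden ratio, and that every $4$-element subset of the five vertices realizes both lengths; so if four pentagon vertices lay in $L$, then $s^2$ and $d^2$ would both be positive integers, forcing $\varphi^2=d^2/s^2$ to be rational, contrary to $\varphi^2=\tfrac{3+\sqrt5}{2}$ being irrational. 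With cases (a), (b), (c) all excluded, no $4$-point subset of $P_m$ can determine only two distances, which is the claim.

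I expect the three cases themselves to be short; the substance is the reduction in the first paragraph, where \cite{perucca4points} together with the fact that $L$ excludes the regular-pentagon trapezoid does the work. The point most worth care is case (b): the square's distance ratio $\sqrt2$ is perfectly realizable in $L$, so --- unlike (a) and (c), which collapse to irrationality facts once one knows squared distances in $L$ are integers --- the square cannot be excluded by a mere irrationality count and genuinely requires the $90^\circ$-rotation obstruction.
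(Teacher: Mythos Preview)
Your proof is correct and follows the same strategy as the paper: reduce via the classification in \cite{perucca4points} to the three cases (equilateral triangle, square, regular-pentagon trapezoid) and rule each out in $L$, with the square handled by the $90^\circ$-rotation obstruction and the pentagon by the irrationality of $\varphi^2$. The only minor variation is that for the equilateral triangle the paper argues directly via the $60^\circ$ rotation (your parenthetical alternative), whereas your primary argument compares the area formula $\tfrac{\sqrt3}{4}s^2$ with the half-covolume constraint to force $\sqrt6\in\Q$; both are short and valid.
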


\begin{proof}
Let $S\subset P_m$ with $|S|=4$. Suppose that $S$ determines exactly two distinct distances.
By Perucca's classification \cite{perucca4points}, either $S$ is a square, $S$ contains an equilateral triangle, or $S$ is
similar to the regular-pentagon trapezoid.

The square case is impossible by Lemma~\ref{lem:nosquare}, and the equilateral-triangle case is impossible by
Lemma~\ref{lem:noequilateral}. The regular-pentagon trapezoid case is impossible by Lemma~\ref{lem:nopentagon}.
Hence $S$ cannot determine only two distances and must determine at least three.
\end{proof}

\subsection{Squares}

\begin{lemma}\label{lem:nosquare}
The lattice $L$ contains no non-degenerate square.
\end{lemma}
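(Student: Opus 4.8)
The plan is to work with the edge vectors of the putative square and exploit the irrationality of $\sqrt2$. Suppose, for contradiction, that $L$ contains a non-degenerate square with vertices $A,B,C,D$ in cyclic order. Since $L$ is a subgroup of $\R^2$, the two edge vectors issuing from $A$ again lie in $L$, so we may write
\[
\vec u=B-A=(a,\sqrt2\,b),\qquad \vec w=D-A=(c,\sqrt2\,d),\qquad a,b,c,d\in\Z,
\]
and $\vec u\neq(0,0)$ because the square is non-degenerate.

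The key step is to record what "square" forces on $\vec u$ and $\vec w$: adjacent sides are orthogonal and of equal length, so $\vec u\cdot\vec w=0$ and $\|\vec u\|=\|\vec w\|$. In the plane these two conditions are equivalent to $\vec w$ being $\vec u$ rotated by $\pm90^\circ$, i.e.
\[
(c,\sqrt2\,d)=\pm(-\sqrt2\,b,\ a).
\]
Comparing first coordinates gives $c=\mp\sqrt2\,b$ with $b,c\in\Z$; were $b\neq0$ this would make $\sqrt2=\mp c/b$ rational, so $b=0$ and hence $c=0$. Comparing second coordinates gives $\sqrt2\,d=\pm a$ with $a,d\in\Z$; the same argument forces $d=0$ and then $a=0$. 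Thus $\vec u=(0,0)$, contradicting non-degeneracy, so no such square exists.

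I do not expect a genuine obstacle here: once the edge vectors are named, the conclusion is a one-line consequence of the irrationality of $\sqrt2$. The only points worth stating carefully are (a) that $L$ is a subgroup, so edge vectors have the displayed shape, and (b) the elementary planar fact that "perpendicular and of equal length" means "rotated by $\pm90^\circ$"; if one prefers to avoid even that, the same conclusion drops out algebraically by eliminating $c$ from $ac+2bd=0$ and $a^2+2b^2=c^2+2d^2$, which factors as $(a^2+2b^2)(a^2-2d^2)=0$ and again forces everything to vanish. The identical reasoning applied to the diagonals $\vec{AC}$ and $\vec{BD}$ (likewise perpendicular and equal) would work verbatim.
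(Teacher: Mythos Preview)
Your proof is correct and follows essentially the same route as the paper: take an edge vector $(a,\sqrt2\,b)\in L$, note that the adjacent side must be its $\pm90^\circ$ rotation $(-\sqrt2\,b,a)$, and use the irrationality of $\sqrt2$ to force both components to vanish. The only cosmetic difference is that you name both edge vectors $\vec u,\vec w$ explicitly before invoking the rotation, whereas the paper rotates a single edge vector directly.
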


\begin{proof}
If four points of $L$ form a square, then the vector between two adjacent vertices is a nonzero vector
\[
 \vec v=(u,\sqrt2\,v)\in L,\qquad (u,v)\in\Z^2\setminus\{(0,0)\},
\]
and the other side vector is obtained by a $90^\circ$ rotation:
\[
 \begin{pmatrix} 0 & -1\\[2pt] 1 & 0\end{pmatrix} \vec v = (-\sqrt2\,v,\,u).
\]
As such, $(-\sqrt2\,v,\,u)$ must belong to L, which implies $-\sqrt2\,v \in \Z$ and $u\in\sqrt{2}\Z $. This is only possible if $ \vec v = 0$ 
and thus forces a contradiction.
\end{proof}

\subsection{Equilateral triangles}

\begin{lemma}\label{lem:noequilateral}
The lattice $L$ contains no non-degenerate equilateral triangle.
\end{lemma}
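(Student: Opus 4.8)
The plan is to compare the area of a putative equilateral triangle computed in two different ways. Suppose, toward a contradiction, that $p_1,p_2,p_3\in L$ are the vertices of a non-degenerate equilateral triangle, say $p_k=(x_k,\sqrt2\,y_k)$ with $x_k,y_k\in\Z$, and let $s>0$ be the common side length.

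First I would record the feature of $L$ that makes this work: squared distances between points of $L$ are \emph{rational integers}. Writing $u=x_1-x_2$ and $v=y_1-y_2$ in $\Z$, we get $s^2=\|p_1-p_2\|^2=u^2+2v^2\in\Z_{>0}$, so the area of the triangle equals $\tfrac{\sqrt3}{4}s^2$ with $s^2$ a positive integer. Next I would compute the same area by the shoelace formula; since the $y$-coordinates all carry a factor $\sqrt2$, one obtains
\[
\mathrm{Area}=\frac{\sqrt2}{2}\,\bigl|x_1(y_2-y_3)+x_2(y_3-y_1)+x_3(y_1-y_2)\bigr|=\frac{\sqrt2}{2}\,|N|,
\]
where $N\in\Z$, and non-degeneracy (the three points are not collinear) forces $N\neq0$, so $|N|\ge1$. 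Equating the two expressions gives $\sqrt3\,s^2=2\sqrt2\,|N|$; multiplying through by $\sqrt2$ yields $\sqrt6=4|N|/s^2\in\Q$, contradicting the irrationality of $\sqrt6$.

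I do not expect a serious obstacle here; the only point needing a little care is that the argument genuinely uses that $s^2$ is an \emph{integer} rather than merely a rational number — this is exactly what the anisotropic scaling by $\sqrt2$ provides, together with the fact that the area of any lattice triangle of $L$ lies in $\tfrac{\sqrt2}{2}\Z$. An alternative route, paralleling the proof of Lemma~\ref{lem:nosquare}, is to rotate one side vector $\vec v=(u,\sqrt2\,v)$ by $60^\circ$ about a vertex and check that the image cannot lie in $L$ unless $\vec v=0$; this runs into the same irrationality obstruction (one is forced to conclude $\sqrt6\in\Q$, or equivalently $\sqrt{3/2}\in\Q$), but the area computation above is cleaner and avoids casework, so I would present that as the main proof.
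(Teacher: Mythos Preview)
Your proof is correct. The area comparison is a genuinely different route from the paper's argument: the paper rotates a side vector $\vec v=(u,\sqrt2\,v)$ by $\pm60^\circ$ and checks directly that the rotated vector cannot lie in $L$ unless $\vec v=0$, using that $\sqrt3\notin\Q(\sqrt2)$ (equivalently, $\sqrt6\notin\Q$). Your approach reaches the same irrationality obstruction via the identity $\tfrac{\sqrt3}{4}s^2=\tfrac{\sqrt2}{2}|N|$, which is pleasingly symmetric and avoids the $\pm60^\circ$ casework; it also makes transparent the general principle that an equilateral triangle with rational squared side length must have irrational area proportional to $\sqrt3$, while any triangle in $L$ has area in $\tfrac{\sqrt2}{2}\Z$. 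The paper's rotation argument, on the other hand, parallels the square case (Lemma~\ref{lem:nosquare}) structurally and isolates exactly which coordinate fails to land in $L$. One small remark: your comment that the argument ``genuinely uses that $s^2$ is an integer rather than merely a rational number'' slightly overstates things --- $s^2\in\Q$ and $N\in\Q$ would already force $\sqrt6\in\Q$; integrality is convenient but not essential.
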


\begin{proof}
If $p,q,r\in L$ form an equilateral triangle, then $\vec v=q-p\in L$ is nonzero and $r-p$ equals $R_{\pm 60}\vec v$, where
\[
R_{60}=\begin{pmatrix} \tfrac12 & -\tfrac{\sqrt3}{2}\\[2pt] \tfrac{\sqrt3}{2} & \tfrac12\end{pmatrix}.
\]
Write $\vec v=(u,\sqrt2\,v)$ with $u,v\in\Z$, not both $0$. Then
\[
R_{60}\vec v=\left(\frac{u-\sqrt6\,v}{2},\ \frac{\sqrt3\,u+\sqrt2\,v}{2}\right).
\]
For $R_{60}\vec v$ to lie in $L$, there must exist $k\in\Z$ such that
\[
\frac{\sqrt3\,u+\sqrt2\,v}{2}=\sqrt2\,k,
\]
so $\sqrt3\,u=\sqrt2\,(2k-v)\in\Q(\sqrt2)$. Since $\sqrt3\notin\Q(\sqrt2)$, this forces $u=0$. Then the first coordinate
becomes $-\frac{\sqrt6\,v}{2}$, which is irrational unless $v=0$. This implies $u=v=0$, forcing a contradiction with $\vec v\ne 0$.
The same argument holds for $R_{-60}$.
\end{proof}

\begin{remark}
The exclusion of squares and equilateral triangles in rectangular or anisotropic lattice constructions is standard; see \cite{sheffer2014blog} for related discussion.
\end{remark}

\subsection{The regular-pentagon trapezoid}

\begin{lemma}\label{lem:nopentagon}
The lattice $L$ contains no $4$-point set similar to the isosceles trapezoid arising from four vertices of a regular pentagon.
\end{lemma}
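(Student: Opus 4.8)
The plan is to exploit an arithmetic feature of the regular-pentagon trapezoid that the square and the equilateral-triangle configurations do not share: its two distances stand in an \emph{irrational} ratio, and hence so do even their squares, whereas in $L$ every squared distance is an integer. Concretely, recall from Section~\ref{sec:distinct-dist} that for $p=(x_1,\sqrt2\,y_1)$ and $q=(x_2,\sqrt2\,y_2)$ in $L$ one has $\|p-q\|^2=(x_1-x_2)^2+2(y_1-y_2)^2\in\Z_{\ge 0}$; thus the squared distances occurring within any finite $S\subset L$ are positive integers, and in particular any two of them have rational ratio.

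Next I would pin down the distance structure of the target configuration. Labelling four vertices of a regular pentagon so that they form the isosceles trapezoid in question, one checks from the standard coordinates --- or from the classical identity $d=\varphi s$ relating a pentagon's diagonal $d$ and side $s$, where $\varphi=\frac{1+\sqrt5}{2}$ --- that the six pairwise distances take exactly two values: $s$, realized by the two legs and the short parallel side, and $d=\varphi s$, realized by the long parallel side and the two diagonals. Hence the ratio of the two \emph{squared} distances equals $\varphi^2=\left(\frac{1+\sqrt5}{2}\right)^{2}=\frac{3+\sqrt5}{2}$, which is irrational.

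Combining these two facts finishes the proof. A similarity of $\R^2$ multiplies all pairwise distances by a single positive constant, so it preserves the number of distinct distances and all of their ratios. Therefore, if some $S\subset L$ with $|S|=4$ were similar to the regular-pentagon trapezoid, then $D(S)$ would consist of exactly two values $d_1<d_2$ with $d_2^{2}/d_1^{2}=\varphi^2$; but $d_1^{2}$ and $d_2^{2}$ are positive integers by the previous paragraph, so $d_2^{2}/d_1^{2}\in\Q$, contradicting the irrationality of $\varphi^2$. Hence $L$ contains no $4$-point set similar to this trapezoid.

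I do not anticipate a genuine obstacle; the only step requiring any care is confirming that the trapezoid really is a two-distance set with long-to-short ratio exactly $\varphi$, which is immediate from the pentagon's coordinates and the diagonal formula, after which everything reduces to the one-line remark that $\varphi^2\notin\Q$ while ratios of integral squared distances lie in $\Q$. It is perhaps worth recording in the final write-up why this shortcut does \emph{not} dispose of the other two cases: the square has squared-distance ratio $2$ and the equilateral-triangle configurations have squared-distance ratio $3$, both rational, which is exactly why those similarity types genuinely require the rigidity arguments of Lemmas~\ref{lem:nosquare} and~\ref{lem:noequilateral}.
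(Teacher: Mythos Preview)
Your argument is correct and is essentially identical to the paper's own proof: both observe that squared distances in $L$ are integers (hence have rational ratios) while the regular-pentagon trapezoid forces the squared-distance ratio $\varphi^2=\frac{3+\sqrt5}{2}\notin\Q$. Your added remark explaining why this shortcut fails for the square and equilateral-triangle cases (ratios $2$ and $3$) is a nice touch not present in the paper.
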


\begin{proof}
In this configuration, the ratio of the diagonal length to the side length equals the golden ratio
$\varphi=\frac{1+\sqrt5}{2}$, hence the ratio of squared lengths (diagonal to side) is $\varphi^2=\frac{3+\sqrt5}{2}$.

For points in $L$, every squared distance equals $u^2+2v^2\in\Z$. Therefore any ratio of (nonzero) squared distances
determined by a $4$-point subset of $L$ is rational, and in particular cannot equal $\varphi^2$.
\end{proof}

\section{Conclusion}

We have shown that the anisotropic lattice box $P_m \subset \Z \times \sqrt2\,\Z$ satisfies both the asymptotic
distinct-distance bound
\[
|D(P_m)| = O(\frac{|P_m|}{\sqrt{\log |P_m|}})
\]
and the local constraint that every $4$-point subset determines at least three distinct pairwise distances. For a general $n$,
taking any $n$-point subset of $P_{\lceil\sqrt n\rceil}$ preserves the local constraint and gives the same asymptotic
upper bound (up to constants), as in Corollary~\ref{cor:general-n}. The construction ultimately works because $60^\circ$ and $90^\circ$ rotations, as well as the golden-ratio distance relation, are incompatible with the quadratic field
$\Q(\sqrt2)$ and this last obstruction rules out the regular-pentagon trapezoid configuration.

\section*{Acknowledgements}
Gemini 3.0 provided the core ideation behind this approach, and specifically surfaced the observation that the same lattice excludes the regular-pentagon trapezoid configuration. The author relied extensively on GPT-5.2 and Gemini 3.0 for drafting, editing, and exploring possible proof strategies. All mathematical claims, proofs, and citations were independently verified by the author, who takes responsibility for any errors. I am grateful to Professor Peter Doyle and Professor Peter Winkler for helpful discussions and feedback on early drafts. I also thank the participants in the public comment thread on the Erd\H{o}s Problems website \cite{erdosproblems}, particularly Desmond Weisenberg for identifying the local constraint as six forbidden shapes, Terrence Tao for highlighting prior work from Sheffer and Moree--Osburn, and Boris Alexeev for formalizing the proof in Lean.


\begin{thebibliography}{99}

\bibitem{erdos1997}
P. Erd\H{o}s,
\textit{Some of my favourite unsolved problems},
Math. Japon. \textbf{46} (1997), 527--537.

\bibitem{erdosproblems}
T. F. Bloom,
\textit{Erd\H{o}s Problem \#659},
\url{https://www.erdosproblems.com/659}, accessed 2025-12-21.


\bibitem{brinkmoreeosburn2011}
D. Brink, P. Moree, and R. Osburn,
\textit{Principal forms $X^2+nY^2$ representing many integers},
Integers \textbf{11} (2011), A66.

\bibitem{bernays1912}
P. Bernays,
\textit{\"Uber die Darstellung von positiven, ganzen Zahlen durch die primitiven, bin\"aren quadratischen Formen einer nicht-quadratischen Diskriminante},
Dieterich, G\"ottingen, 1912.

\bibitem{sheffer2014blog}
A. Sheffer,
\textit{Point sets with few distinct distances},
blog post (July 16, 2014),
\url{https://adamsheffer.wordpress.com/2014/07/16/point-sets-with-few-distinct-distances/}.

\bibitem{moreeosburn2006}
P. Moree and R. Osburn,
\textit{Two-dimensional lattices with few distances},
L'Enseignement Math\'ematique (2) \textbf{52} (2006), no.~3--4, 361--380;
also available as \url{https://arxiv.org/abs/math/0604163}.

% \bibitem{sheffer2014survey}
% A. Sheffer,
% \textit{Distinct distances: open problems and current bounds},
% arXiv:1406.1949 (2014), \url{https://arxiv.org/abs/1406.1949}.

\bibitem{perucca4points}
A. Perucca,
\textit{Four points, two distances},
available at \url{https://www.antonellaperucca.net/didactics/4points2distances.pdf},
accessed 2025-12-21.


\end{thebibliography}
\end{document}